\theoremstyle{plain}
\newtheorem{theorem}{Theorem}
\newtheorem{proposition}[theorem]{Proposition}
\theoremstyle{remark}
\newcommand\C{\mathbb C}
\newcommand\N{\mathbb N}
\newcommand\hP{\mathbb P}
\renewcommand\l{\lambda}
\newcommand\tr{\operatorname{Tr}}
\begin{document}

\title[]{Maps on positive definite matrices preserving Bregman and Jensen divergences}

\author{Lajos Moln\'ar}
\address{Department of Analysis, Bolyai Institute\\
University of Szeged\\
H-6720 Szeged, Aradi v\'ertan\'uk tere 1.,
Hungary and
MTA-DE ``Lend\" ulet'' Functional Analysis Research Group, Institute of Mathematics\\
         University of Debrecen\\
         H-4010 Debrecen, P.O. Box 12, Hungary}
\email{molnarl@math.u-szeged.hu}
\urladdr{http://www.math.unideb.hu/\~{}molnarl/}
        
\author{J\'ozsef Pitrik}
\address{Institute of Mathematics\\
Budapest University of Technology and Economics\\
H-1521 Budapest, Hungary}
\email{pitrik@math.bme.hu}
\urladdr{http://www.math.bme.hu/\~{}pitrik}

\author{D\'aniel Virosztek}
\address{Institute of Mathematics\\
Budapest University of Technology and Economics\\
H-1521 Budapest, Hungary}
\email{virosz@math.bme.hu}
\urladdr{http://www.math.bme.hu/\~{}virosz}

\thanks{The first and third authors were supported by the "Lend\" ulet" Program (LP2012-46/2012) of the Hungarian Academy of Sciences. The first author was also supported by the Hungarian Scientific Research Fund (OTKA) Reg. No. K115383 and the third author was partially supported by the Hungarian Scientific Research Fund (OTKA) Reg. No.  K104206.}

\keywords{Bregman divergence, Jensen divergence, positive definite matrices, preservers}
\subjclass[2010]{Primary: 47B49, 15B48.}

\begin{abstract}
In this paper we determine those bijective maps of the set of all positive definite $n\times n$ complex matrices which preserve a given Bregman divergence corresponding to a differentiable convex function that satisfies certain conditions. We cover the cases of the most important
Bregman divergences and present the precise structure of the mentioned transformations. Similar results concerning Jensen divergences and their preservers are also given.
\end{abstract}
\maketitle
\section{Introduction}

In a series of papers \cite{ML14a, ML15a,ML15b,ML13j} the first author and his coauthors described the structures of surjective maps of the positive definite cones in matrix algebras, or in operators algebras which can be considered generalized isometries meaning that they are transformations which preserve "distances" with respect to given so-called generalized distance measures.
This latter notion stands for any function $d:\mathcal X\times \mathcal X\to [0,\infty )$ on any set $\mathcal X$ with the mere property that for $x,y\in \mathcal X$ we have $d(x,y)=0$ if and only if $x=y$. We recall that in several areas of mathematics not only metrics are used to measure nearness of points but also more general functions of this latter kind. 

In \cite{ML15b,ML13j} the considered generalized distance measures are of the form $d=d_{N,g}$, where $N(.)$ is a unitarily invariant norm on the underlying matrix algebra or operator algebra, $g:(0,\infty)\to \mathbb C$ is a continuous function with the properties  
\begin{itemize}
\item[(a1)]
$g(y)=0$ if and only if $y=1$; 
\item[(a2)]
there exists a constant $K>1$ such that
$\left|g\left(y^2\right)\right|\geq K\left|g(y)\right|$, $y>0$,
\end{itemize}
and the generalized distance measure 
$d_{N,g}$ is defined by
\begin{equation}\label{E:M4}
d_{N,g}(X,Y)=N\left(g\left(Y^{-1/2}XY^{-1/2}\right)\right)
\end{equation}
for all positive invertible elements $X,Y$ of the underlying algebra.
In the mentioned papers one can see several important examples of that sort of generalized distance measures, many of them having backgrounds in the differential geometry of positive definite matrices or operators.
The basic tools in describing the structure of the corresponding generalized isometries have been so-called generalized Mazur-Ulam type theorems and descriptions of certain algebraic isomorphisms (Jordan triple isomorphisms) of the positive definite cones in question.

In the present paper we determine the structures of generalized isometries with respect to other important types of generalized distance measures. Namely, here we consider Bregman divergences and Jensen divergences. These types of divergences have wide ranging applications in several areas of mathematics. For example, in the recent volume \cite{NB} on matrix information geometry 3 chapters are devoted to the study of Bregman divergences. One feature of Jensen divergences which justifies their importance is that Bregman divergences can be considered as asymptotic Jensen divergences (see Section 6.2 in \cite{NB}). We further mention that the famous Stein's loss and Umegaki's relative entropy are among the most important Bregman divergences. Our basic tool in this paper to determine the corresponding preserver transformations is, just as above, also algebraic in nature but rather different from what we have mentioned in the previous paragraph. Namely, here we use order isomorphisms.

Before presenting the results we fix the notation and terminology.
In what follows $\mathbb M_n$ denotes the algebra of all $n\times n$ complex matrices and $\hP_n$ stands for the positive definite cone in $\mathbb M_n$, i.e., the set of all positive definite matrices in $\mathbb M_n$. Whenever convenient, in the paper we use the equivalence of the language of matrices and that of linear operators on $\mathbb C^n$.

We next define the two basic concepts what we consider in this paper, Bregman divergence and Jensen divergence. Both concepts are connected to convex real functions.
Let $f$ be a convex function on the interval $(0,\infty)$. It is known that $f$ is necessarily continuous and the set of points where $f$ is differentiable has at most countable complement. It is a remarkable fact that if $f$ is everywhere differentiable then its derivative $f'$ is automatically continuous (see e.g. \cite[Corollary 25.5.1]{R}).

For a differentiable convex function $f$ on $(0, \infty)$, the Bregman $f$-divergence on $\hP_n$ is
\[
H_f(X,Y)=\tr \left(f(X)-f(Y)-f'(Y)(X-Y)\right), \quad X,Y\in \hP_n,
\]
see e.g. formula (5) in \cite{pv15}. If $\lim_{x \to 0+} f(x)$ and $\lim_{x \to 0+} f'(x)$ exist, then $f,f'$ have continuous extensions onto $[0,\infty )$ and the Bregman $f$-divergence is well-defined and finite for any pair of positive semidefinite matrices, too.

For a convex function $f$ on $(0, \infty)$ and for given $\lambda \in (0,1)$, the Jensen $\lambda-f$-divergence on $\hP_n$ is defined by
\[
J_{f, \lambda}(X,Y)=\tr \left(\lambda f(X)+(1-\lambda)f(Y)-f\left(\lambda X +(1-\lambda) Y\right)\right), \quad X,Y\in \hP_n.
\]
If $\lim_{x \to 0+} f(x)$ exists, then the Jensen $\lambda-f$-divergence is also well-defined and finite for any pair of positive semidefinite matrices.

It is well-known that $H_f$ and $J_{f, \lambda}$ are always nonnegative and they are generalized distance measures (in the sense we use in this paper) if and only if $f$ is strictly convex.

Our main aim is to describe the "symmetries" of the positive definite cone $\hP_n$ that preserve above type of divergences. This means that we are looking for the structure of all bijective maps $\phi:\hP_n \to \hP_n$ for which 
\[
H_f\left(\phi(X),\phi(Y)\right)=H_f(X,Y), \quad X,Y\in \hP_n
\]
or
\[
J_{\l, f}\left(\phi(X),\phi(Y)\right)=J_{\l, f}(X,Y), \quad X,Y\in \hP_n
\]
holds.
%%%%%%%%%%%%%%%%%%%%%%%%%%%%%%%%%%%%%%%%%%%%%%%Itt tartok
\section{The results}

We now turn to our results. It is useful to examine first the question that how different the present problem is from the ones we have considered in the papers \cite{ML14a,ML15a,ML15b,ML13j}. To see clearly the differences we determine below which Bregman divergences, respectively which Jensen divergences are of the form \eqref{E:M4}. 
Let us begin with the case of Bregman divergences.

One of the most important Bregman divergence is the one usually denoted by $l$ which corresponds to the strictly convex function $f(x)=-\log x$, $x>0$ and is called Stein's loss. Apparently, we have
\[
l(X,Y)=-\tr \left(\log X -\log Y -Y^{-1}(X-Y)\right)=
\tr XY^{-1}-\log \det XY^{-1}-n
\]
for all $X,Y\in \hP_n$, where we have used the identity $\tr \circ \log=\log \circ \det$ on $\hP_n$.
On the other hand, the continuous function $g(y)=y-\log y -1, y>0$ is nonnegative, satisfies the conditions (a1), (a2) (with constant $K=2$), and with the trace norm $\|.\|_1$ we easily get
\[ 
d_{\|.\|_1, g}(X,Y)=\left\| g\left(Y^{-1/2} XY^{-1/2}\right)\right\|_1=l(X,Y), \quad X,Y\in \hP_n.
\]

In the next proposition we prove that Stein's loss is essentially the only Bregman divergence on $\hP_n$ which is a generalized distance measure of the form \eqref{E:M4}. Observe that any distance measure of the form \eqref{E:M4} is invariant under multiplication of the variables $X,Y$ by the same positive scalar $t$.

\begin{proposition}\label{P:1}
Let $f$ be a differentiable convex function on $(0,\infty)$. Assume that the Bregman $f$-divergence $H_f$ on $\hP_n$ is homogeneous of degree 0, i.e., it satisfies
\begin{equation}\label{E:M5}
H_f(tX,tY)=H_f(X,Y)\quad X,Y\in \hP_n, t>0.
\end{equation}
Then we have $f(x)=a\log x +bx +c$, $x>0$ with some real scalars $a,b,c$ and $a\leq 0$.
\end{proposition}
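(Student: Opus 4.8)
The plan is to collapse the matrix identity to a one-variable functional equation for $f'$ and solve it. First I restrict the hypothesis \eqref{E:M5} to scalar matrices $X=xI$, $Y=yI$ (with $x,y>0$). Writing
\[
h(x,y)=f(x)-f(y)-f'(y)(x-y),
\]
functional calculus gives $H_f(xI,yI)=n\,h(x,y)$, so \eqref{E:M5} is equivalent to $h(tx,ty)=h(x,y)$ for all $x,y,t>0$, i.e. $h$ is homogeneous of degree $0$ in its two scalar arguments. Choosing $t=1/y$ shows that $h(x,y)$ depends only on the ratio $x/y$:
\begin{equation}\label{E:plan1}
f(sy)-f(y)-y f'(y)(s-1)=F(s),\qquad s,y>0,
\end{equation}
where $F(s):=h(s,1)=f(s)-f(1)-f'(1)(s-1)$.

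The key step exploits the fact that the product $sy$ in \eqref{E:plan1} is symmetric in $s$ and $y$. Since \eqref{E:plan1} holds for every pair, I may interchange $s$ and $y$ and subtract the resulting identity from \eqref{E:plan1}; the term $f(sy)$ cancels, and after inserting the definition of $F$ the remaining occurrences of $f(s)$ and $f(y)$ cancel as well. Setting $q(x):=x f'(x)$, the difference reduces to the purely affine relation
\[
q(s)(y-1)-q(y)(s-1)=f'(1)(y-s),\qquad s,y>0.
\]
Now I fix any $y_0\neq 1$ and solve for $q(s)$: the right-hand side is then affine in $s$, so $q(s)=\alpha s+\beta$ for suitable real constants $\alpha,\beta$. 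Hence $s f'(s)=\alpha s+\beta$, that is $f'(s)=\alpha+\beta/s$, and integrating (the antiderivative being determined up to an additive constant) yields $f(x)=a\log x+bx+c$ with $a=\beta$ and $b=\alpha$.

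It remains to pin down the sign condition. Convexity of $f$ means that $f'(x)=b+a/x$ is nondecreasing on $(0,\infty)$; since $x\mapsto 1/x$ is strictly decreasing, this forces $a\le 0$ (equivalently $f''(x)=-a/x^2\ge 0$ wherever the second derivative exists). A direct substitution confirms the converse --- every such $f$ gives $h(x,y)=a\bigl(\log(x/y)-(x/y-1)\bigr)$, which indeed depends only on $x/y$ --- so the described family is exactly the admissible one. I expect the main obstacle to be isolating the correct algebraic manipulation of \eqref{E:plan1}: the symmetrization trick is what linearizes the equation, whereas a naive attempt to differentiate \eqref{E:plan1} in $s$ would bring in the second derivative of $f$, which is not available under the stated hypotheses.
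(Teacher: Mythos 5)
Your proof is correct, but it follows a genuinely different route than the paper's. The paper also restricts to scalar matrices to get the two-variable functional equation, but then it bootstraps regularity: setting $t=1/y$ it solves for $f'(y)$ in terms of values of $f$ at other points, concludes that $f$ is twice continuously differentiable, differentiates the equation twice in $x$ to obtain $f''(x)=t^2f''(tx)$, and reads off $f''(t)=\mathrm{const}\cdot t^{-2}$. Your symmetrization trick --- swapping $s$ and $y$ in $f(sy)-f(y)-yf'(y)(s-1)=F(s)$ so that $f(sy)$, and then $f(s)$, $f(y)$, cancel --- linearizes the equation directly and shows $q(x)=xf'(x)$ is affine using only the first derivative that the hypotheses provide; no extra smoothness is needed, which is a real economy over the paper's argument. (Amusingly, your endgame is exactly the mechanism the paper uses for Proposition~\ref{P:2} on Jensen divergences, where it also concludes by showing $xf'(x)$ is affine.) One side remark of yours is off, though it does not affect the proof: differentiating your equation \emph{in $s$} would not bring in $f''$ at all, since $f'(y)$ there is merely a coefficient; one gets $yf'(sy)-yf'(y)=f'(s)-f'(1)$, and a similar swap-and-eliminate manipulation again yields the affine equation for $xf'(x)$. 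It is differentiation in $y$ that would require second derivatives. Also note the final converse verification is unnecessary for the stated proposition, which only asserts one direction.
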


\begin{proof}
We plug scalar multiples of the identity $X=xI, Y=yI$, $x,y>0$ into the equality \eqref{E:M5} and obtain
\begin{equation}\label{E:M1}
f(tx)-f(ty)-f'(ty)t(x-y)=f(x)-f(y)-f'(y)(x-y), \quad t>0.
\end{equation}
Choosing $t=1/y$ and reordering this equality we have
\[
f'(y)=(1/(x-y))(f(x)-f(y)-f(x/y)+f(1)+f'(1)((x/y)-1)))
\]
implying that $f$ is twice continuously differentiable.
Differentiating \eqref{E:M1} with respect to $x$ twice we obtain $f''(x)=f''(tx)t^2$, $x,t>0$. In particular, it follows that
$f''(t)$ is a constant multiple of $t^{-2}$, $t>0$. We infer that $f(t)=a \log t+bt +c$, $t>0$ holds with some real constants $a,b,c$. By the convexity of $f$ we have $a\leq 0$.
\end{proof}

Concerning the appearance of the function $x\mapsto bx+c$ in the above proposition we note that adding an affine function to a given convex function $f$ does not change the corresponding Bregman divergence (the same holds for Jensen divergence, too), hence that part simply does not count.

We next examine the case of Jensen divergences.
Again, let $f(x)=-\log x$, $x>0$ and pick $\l\in (0,1)$. It is easy to see that the corresponding Jensen divergence is
\[
J_{-\log, \l}(X,Y)=\log \det \left(\l X+(1-\l) Y\right)-\log \det X^\l Y^{1-\l}, \quad X,Y\in \hP_n
\]
which can also be written as 
\[
\begin{gathered}
J_{-\log, \l}(X,Y)=
\log \det \left(\l Y^{-1/2} XY^{-1/2}+(1-\l) I\right)-\log \det \left(Y^{-1/2}X Y^{-1/2}\right)^\l\\=
\tr \left(\log \left(\left(\l Y^{-1/2} XY^{-1/2}+(1-\l) I\right)\left(Y^{-1/2}X Y^{-1/2}\right)^{-\l}\right)\right), \quad X,Y\in \hP_n.
\end{gathered}
\]
These divergences are scalar multiples of the Chebbi-Moakher log-determinant $\alpha$-divergences, see \cite{ChM,ML15b}. As mentioned in \cite{ML15b} (see pages 146-147) the continuous
function $g_{\lambda}(y)= \log \left((\l y+(1-\l) )/y^\l \right)$, $y>0$ is nonnegative, satisfies (a1), (a2) (with constant $K=2$) and, moreover, we have
\[
J_{-\log, \l}(X,Y)=\left\| g\left(Y^{-1/2} XY^{-1/2}\right)\right\|_1, \quad X,Y\in \hP_n.
\]
This means that the Jensen divergences $J_{-\log, \l}$ are also of the form \eqref{E:M4}.

Let us insert the remark here that in the particular case $\l=1/2$ the above Jensen divergence was considered and called $S$-divergence in the paper \cite{Sra} of Sra. He proved the interesting fact there that the square root of this divergence is a true metric and proposed to use it as a convenient substitute for the geodesic distance $d_{\|.\|_2,\log}$ ($\|.\|_2$ standing for the Hilbert-Schmidt or Frobenius norm) originating from the natural Riemann geometric structure on $\hP_n$.

Continuing the discussion, above we have seen that the divergences $J_{-\log, \l}$ are of the form \eqref{E:M4}. In what follows we show that they are essentially the unique Jensen divergences with this property.

\begin{proposition}\label{P:2}
Let $f$ be a convex function on $(0,\infty)$ and pick a number $\l \in (0,1)$.
Assume that the corresponding Jensen $\lambda-f$ divergence is homogeneous of order 0, i.e., it satisfies
\begin{equation}\label{E:M6}
J_{f,\l}(tX,tY)=J_{f,\l}(X,Y)\quad X,Y\in \hP_n, t>0.
\end{equation}
Then we have $f(x)=a\log x +bx +c$, $x>0$ with some real scalars $a,b,c$ and $a\leq 0$.
\end{proposition}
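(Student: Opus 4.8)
The plan is to follow the scheme of the proof of Proposition \ref{P:1}: reduce the matrix identity to a scalar functional equation and then extract an equation whose only solutions have the asserted form. First I would substitute scalar multiples of the identity, $X=xI$ and $Y=yI$ with $x,y>0$, into \eqref{E:M6}. Since $f(xI)=f(x)I$ and $\l xI+(1-\l)yI=(\l x+(1-\l)y)I$, taking traces cancels the common factor $n$ and the homogeneity condition becomes
\[
\l f(tx)+(1-\l)f(ty)-f\bigl(t(\l x+(1-\l)y)\bigr)=\l f(x)+(1-\l)f(y)-f(\l x+(1-\l)y)
\]
for all $x,y,t>0$. In other words, the ``Jensen gap'' $\Phi(x,y)=\l f(x)+(1-\l)f(y)-f(\l x+(1-\l)y)$ is homogeneous of degree $0$.

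Next I would convert this into a functional equation for the derivative of $f$. The essential difference from Proposition \ref{P:1} is that here $f$ is only assumed convex, hence a priori merely continuous, but it still possesses a right-hand derivative $f'_+$ at every point, which is nondecreasing. Observing that the left-hand side of the displayed identity is constant in $t$, I would differentiate it from the right at $t=1$. Since one-sided derivatives are additive and the right-hand derivative of $t\mapsto f(ts)$ at $t=1$ equals $s\,f'_+(s)$, this yields
\[
\l x\,f'_+(x)+(1-\l)y\,f'_+(y)=w\,f'_+(w),\qquad w=\l x+(1-\l)y,
\]
for all $x,y>0$. Writing $g(s)=s\,f'_+(s)$, this is precisely the weighted Jensen equation $g(\l x+(1-\l)y)=\l g(x)+(1-\l)g(y)$.

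Finally I would solve this Jensen equation. The function $g$ is measurable, being the product of the continuous map $s\mapsto s$ and the monotone map $f'_+$; hence, by the standard fact that every measurable solution of the weighted Jensen equation is affine, $g(s)=\alpha s+\beta$ for some real $\alpha,\beta$. Consequently $f'_+(s)=\alpha+\beta/s$, which is continuous, so $f$ is in fact continuously differentiable with $f'(s)=\alpha+\beta/s$; integrating gives $f(x)=\beta\log x+\alpha x+c$, the claimed form with $a=\beta$ and $b=\alpha$. Convexity forces $f''(x)=-\beta/x^2\ge 0$, so $a=\beta\le 0$. I expect the main obstacle to be exactly this regularity step: carefully justifying the passage to one-sided derivatives and, above all, invoking (or reproving) that a measurable solution of the weighted Jensen equation must be affine. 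This is what has to replace the ``twice continuously differentiable'' bootstrap that was available in Proposition \ref{P:1}, where differentiability of $f$ was assumed from the outset.
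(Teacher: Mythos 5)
Your proof is correct, and its skeleton coincides with the paper's: restrict to scalar multiples of the identity, differentiate the resulting functional equation in $t$, and land on the weighted Jensen equation for $s\mapsto sf'(s)$, whose affine solutions give $f(x)=a\log x+bx+c$ with $a\leq 0$. Where you genuinely diverge is the regularity step. The paper first upgrades $f$ to $C^1$ by rewriting the scalar equation (with $x=1$) so that $f(t)$ is expressed through scaled copies of itself and invoking J\'arai's regularity theorem for functional equations; after that it differentiates in $t$ and only needs the elementary fact that \emph{continuous} solutions of the fixed-weight Jensen equation are affine. You instead differentiate one-sidedly, which is legitimate since a convex $f$ has a nondecreasing right derivative $f'_+$ and one-sided differentiation is additive, and you thereby reach the equation for $g(s)=sf'_+(s)$ with no regularity theory at all; the price is that $g$ is a priori only measurable (in fact right-continuous, with at most countably many jumps), so you must know that \emph{measurable} solutions of the Jensen equation with one fixed weight $\lambda$ are affine. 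That fact is true but, for irrational $\lambda$, less ``standard'' than you suggest: it requires Kuhn's theorem that $t$-convexity for a single fixed $t\in(0,1)$ implies midpoint convexity (applied to both $g$ and $-g$), combined with the classical Sierpi\'nski--Fr\'echet result on measurable midconvex functions; for rational $\lambda$ an elementary iteration suffices. You could also avoid Kuhn entirely by exploiting the monotonicity of $f'_+$: since $\lim_{u\uparrow s}g(u)=sf'_-(s)$, letting $y$ increase to $y_0$ in your identity and subtracting the identity itself yields $w\left(f'_+(w)-f'_-(w)\right)=(1-\lambda)\,y_0\left(f'_+(y_0)-f'_-(y_0)\right)$ for $w=\lambda x+(1-\lambda)y_0$; choosing $y_0$ among the cocountably many continuity points of $f'_+$ with $y_0<w/(1-\lambda)$ forces $f'_+=f'_-$ at every $w$, so $g$ is continuous and the elementary affine conclusion applies. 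In exchange for this extra care, your route is self-contained at the level of convex-function calculus, whereas the paper's is shorter but leans on a comparatively heavy regularity theorem.
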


\begin{proof}
Just as above, plugging scalar multiples of the identity $X=xI, Y=yI$, $x,y>0$ into the equality \eqref{E:M6} we have
\begin{equation}\label{E:M2}
\begin{gathered}
\l f(tx)+(1-\l)f(ty)-f(\l tx+(1-\l)ty) \\= \l f(x)+(1-\l)f(y)-f(\l x+(1-\l)y), \quad t>0.
\end{gathered}
\end{equation}
We assert that $f$ is differentiable. In fact, choosing $x=1$ we have
\[
\begin{gathered}
f(t)=(1/\l)( \l f(1) +(1-\l)f(y)- f(\l+(1-\l)y)\\ - (1-\l)f(ty)+f(\l t+(1-\l)ty)).
\end{gathered}
\]
The result \cite[11.3. Theorem]{Jarai} of J\'arai on the regularity of solutions of functional equations applies and tells us that $f$ is necessarily continuously differentiable.
Differentiating \eqref{E:M2} with respect to $t$ we have
\[
0=\l f'(tx)x+(1-\l)f'(ty)y-f'(\l tx+(1-\l)ty)(\l x+(1-\l)y)
\]
implying the equality
\[
\l f'(tx)x+(1-\l)f'(ty)y=f'(\l tx+(1-\l)ty)(\l x+(1-\l)y)
\]
for all positive $t,x,y$.
Choosing $t=1$ we infer
\[
\l f'(x)x+(1-\l)f'(y)y=f'(\l x+(1-\l)y)(\l x+(1-\l)y).
\]
This means that the function $h(x)=f'(x)x$, $x>0$ is affine and hence it is of the form $h(x)=bx+a$, $x>0$. It follows that $f(x)=a\log x+bx+c$, $x>0$ with some real scalars $a,b,c$. Again, the convexity of $f$ implies $a\leq 0$.
\end{proof} 

The above results can be considered as characterizations (hopefully a bit interesting) of the Stein's loss and the Chebbi-Moakher log-determinant $\alpha$-divergences.

We now turn to the descriptions of the corresponding generalized isometries, i.e., transformations preserving the considered divergences. We begin with a general result relating to Bregman divergences.

We first mention the easy fact that both kinds of divergences are clearly invariant under unitary and antiunitary congruence transformations.  These are maps of the form $A\mapsto UAU^*$, where $U$ is a unitary or an antiunitary operator on $\mathbb C^n$. In fact, this follows from the following. For any continuous function $f$ on $(0,\infty)$ and for any unitary or antiunitary operator $U$ on $\mathbb C^n$ we have that $f\left(UAU^*\right)=Uf(A)U^*$ holds for every positive definite $A$. This is the consequence of the fact that on the spectrum of $A$ the function $f$ coincides with a polynomial $p$ and hence we have $f\left(UAU^*\right)=p\left(UAU^*\right)=Up(A)U^*=Uf(A)U^*$. Our theorems below show that in many cases only the unitary and antiunitary congruence transformations preserve the Bregman divergence.

\begin{theorem} \label{breg}
Let $f$ be a differentiable convex function on $(0,\infty )$ such that $f'$ is bounded from below and unbounded from above. Let $\phi: \hP_n \rightarrow \hP_n$ be a bijective map which satisfies
\[
H_f\left(\phi(A), \phi(B)\right)=H_f\left(A, B\right), \quad A,B \in \hP_n.
\]
Then there exists a unitary or antiunitary operator $U: \C^n \rightarrow \C^n$ such that $\phi$ is of the form
\[
 \phi(A)=UAU^{*}, \quad  A \in \hP_n.
\]
\end{theorem}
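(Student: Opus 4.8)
The plan is to uncover the affine structure hidden inside $H_f$, use it to reduce $\phi$ to a linear congruence of the cone, and then exploit the growth conditions on $f'$ to force that congruence to be a genuine (anti)unitary conjugation. The hypotheses on $f'$ will enter at two distinct places, and recognizing where is the whole point.

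First I would record the three-point identity, which follows by expanding the definition:
\[
H_f(A,B)+H_f(B,C)-H_f(A,C)=\tr\left((f'(C)-f'(B))(A-B)\right).
\]
For fixed $B,C$ the quantity $H_f(A,C)-H_f(A,B)$ is therefore an affine functional of $A$ whose linear part is $-\tr\left((f'(C)-f'(B))A\right)$. Applying the invariance $H_f(\phi(A),\phi(C))-H_f(\phi(A),\phi(B))=H_f(A,C)-H_f(A,B)$ and differencing in $A$ kills the constant terms and yields
\[
\tr\left((f'(C)-f'(B))(A_1-A_2)\right)=\tr\left((f'(\phi C)-f'(\phi B))(\phi A_1-\phi A_2)\right)
\]
for all $A_1,A_2,B,C\in\hP_n$. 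The goal of the next step is to turn this into the statement that $\phi$ is the restriction of an affine map.

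Since $f'$ is continuous, nondecreasing and unbounded above, its range is an interval unbounded above; by functional calculus every self-adjoint matrix $M$ can then be written as $f'(C)-f'(B)$ (take $B$ scalar with $f'(B)=bI$ for $b$ large, and $C$ with $f'(C)=M+bI$). Letting $M$ run through all self-adjoint matrices in the displayed identity, and using that the differences $\phi(A_1)-\phi(A_2)$ span the self-adjoint matrices because $\phi$ is onto the open cone $\hP_n$, one obtains a well-defined real-linear bijection $\Theta$ of the space $\mathcal S_n$ of self-adjoint matrices with $\tr\left(M(A_1-A_2)\right)=\tr\left(\Theta(M)(\phi A_1-\phi A_2)\right)$. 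Passing to the trace adjoint gives $\phi(A_1)-\phi(A_2)=\Xi(A_1-A_2)$ for a fixed linear bijection $\Xi$, hence $\phi(A)=\Xi(A)+v$ for some $v\in\mathcal S_n$. The recession cone of $\hP_n$ is the closed positive semidefinite cone, so $\Xi$ maps that cone bijectively onto itself; by the classical description of linear bijections of $\mathcal S_n$ preserving positive semidefiniteness (equivalently, by the known structure of the order isomorphisms of $\hP_n$) we get $\Xi(A)=TAT^*$ or $\Xi(A)=TA^{T}T^*$ for an invertible $T$. A nonzero translate of an open proper cone can never equal the cone, so $v=0$; and the transpose alternative reduces to the first one by composing $\phi$ with the transpose map, which is an antiunitary congruence and hence $H_f$-preserving. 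Thus we may assume $\phi(A)=TAT^*$.

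It remains to show $T$ is unitary, and here the full divergence equation is used. Isolating the $A$-dependent part of $H_f(TAT^*,TBT^*)=H_f(A,B)$ and taking the gradient in $A$ (the gradient of $A\mapsto\tr f(A)$ being $f'(A)$) gives $T^*f'(TAT^*)T-f'(A)=T^*f'(TBT^*)T-f'(B)$, so both sides equal a constant $C_0$, i.e.
\[
T^*f'(TAT^*)T=f'(A)+C_0,\qquad A\in\hP_n.
\]
Letting $A\to 0$ and using that $f'$ is \emph{bounded below} (so $f'(A)\to mI$ with $m$ finite) identifies $C_0=m(T^*T-I)$; writing $\tilde h(A):=f'(A)-mI\ge 0$ then collapses the equation to $T^*\tilde h(TAT^*)T=\tilde h(A)$. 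Putting $A=aI$, setting $S=TT^*$ and using the polar decomposition $T=S^{1/2}W$ with $W$ unitary, this becomes $\sum_j s_j\,h(a s_j)P_j=h(a)I$, that is $s_j\,h(a s_j)=h(a)$ for every eigenvalue $s_j$ of $S$ and all $a>0$, where $h=f'-m$ is nonnegative, nondecreasing and \emph{unbounded above}. For $s_j\neq1$ this contradicts monotonicity of $h$ at any $a$ with $h(a)>0$, so $s_j=1$ for all $j$, whence $TT^*=I$, $T$ is unitary, and $\phi(A)=UAU^*$.

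The \emph{main obstacle} is the linearization step, namely seeing that the three-point identity reduces the problem to a trace pairing and that the growth of $f'$ makes the attainable gradient differences exhaust $\mathcal S_n$, so that the pairing can be inverted to produce the honest linear map $\Xi$; everything afterwards is either a classical cone/order classification or a short functional-equation argument. It is worth stressing that the two hypotheses on $f'$ are used in genuinely different ways: unboundedness above is what fills $\mathcal S_n$ in the affineness step and what kills non-unitary scalings in the final step, while boundedness below is exactly what makes the limit $A\to0$ converge and pins down $C_0$.
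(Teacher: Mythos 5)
Your proof is correct, and it takes a genuinely different route from the paper's at both stages. The paper's first step is order-theoretic: it proves (Claim A) that for fixed $B,C\in\hP_n$ the set $\left\{H_f(B,A)-H_f(C,A) \mid A\in\hP_n\right\}$ is bounded from below precisely when $B\leq C$ (boundedness of $f'$ from below gives one direction, unboundedness from above the other, via rank-one perturbations $tP_x+(I-P_x)$), concludes that $\phi$ is an order automorphism, and invokes Moln\'ar's theorem on order automorphisms of $\hP_n$ to get $\phi(A)=TAT^*$. You instead extract affinity of $\phi$ directly from the three-point cocycle identity: unboundedness of $f'$ from above makes the gradient differences $f'(C)-f'(B)$ exhaust all self-adjoint matrices, the trace pairing then inverts to a linear bijection $\Xi$ with $\phi(A)=\Xi(A)+v$, and recession cones plus the classical classification of linear bijections preserving the positive semidefinite cone (essentially the same structural input as Moln\'ar's theorem, reached by a different door) give the congruence form; your checks (well-definedness of $\Theta$ via spanning, $v=0$ from $\hP_n+v=\hP_n$, transpose absorbed into an antiunitary congruence) all hold. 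For unitarity the divergence between the two arguments is sharper: the paper iterates $A\mapsto P^nAP^n$ on a rank-one projection and uses the symmetrized divergence $\tr\left(\left(f'(A)-f'(B)\right)(A-B)\right)$ to produce a telescoping geometric series forcing $\lim_{x\to\infty}f'(x)<\infty$, a contradiction; you instead differentiate the preservation identity to obtain the exact intertwining $T^*f'(TAT^*)T=f'(A)+C_0$, pin $C_0=m(T^*T-I)$ by letting $A\to0$ (this is where boundedness below enters for you, whereas the paper also uses it in Claim A and to extend $f,f'$ continuously to $[0,\infty)$), and kill non-unit eigenvalues of $TT^*$ by the one-line monotonicity argument on $s\,h(as)=h(a)$ with $h=f'-m\geq 0$ nondecreasing and unbounded. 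Your endgame is shorter and yields a stronger functional identity along the way; what the paper's approach buys is robustness, since the order-characterization template of Claim A is reused almost verbatim for the von Neumann divergence and the Jensen divergence theorems later in the paper, where your gradient computation would be harder to adapt.
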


\begin{proof}
First we recall that since $f$ is convex and everywhere differentiable, $f'$ is continuous.
By the convexity of $f,$ its derivative $f'$ is monotonically increasing. The assumption that $f'$ is bounded from below implies that $\lim_{x \to 0+} f'(x)$ exists and is finite. Hence $f'$ can be continuously extended onto $[0,\infty)$. The same holds for $f$, too. Indeed, $f(x)=f(1)+ \int_{1}^{x} f'(t) \mathrm{d}t$, $x>0$ and $\int_{1}^{x} f'(t) \mathrm{d}t$ is convergent as $x$ tends to $0$ by the boundedness of $f'$ on $(0,1]$.

In the rest of the proof we shall need the following characterization of the usual order $\leq$ on $\hP_n$.

\textbf{Claim A.}
Let $B, C \in \hP_n.$ The set
\begin{equation}\label{E:M7}
 \left\{ H_f(B,A)-H_f(C,A) | A \in \hP_n \right\}
\end{equation}
is bounded from below if and only if $B \leq C.$

To prove the claim we first compute
\begin{equation}\label{bregform}
\begin{gathered}
H_f(B,A)-H_f(C,A)\\
 =\tr f(B)-\tr f(A) -\tr f'(A)(B-A)\\-\tr f(C)+\tr f(A)+\tr f'(A)(C-A)\\
=\tr f(B) - \tr f(C) + \tr f'(A)(C-B).
\end{gathered}
\end{equation}
Let $k$ denote a lower bound of $f'.$ Assume $B \leq C$. Then by the inequality 
\[
 f'(A)\geq k I, \quad A \in \hP_n,
\]
we have that
\[
 \tr f'(A)(C-B) \geq \tr k I (C-B)= k \tr (C-B)
\]
holds for every $A \in \hP_n$ which shows that the set \eqref{E:M7} is bounded from below.
Conversely, if $B \not\leq C,$ then there exists a unit vector $x \in \C^n$ such that $\left< Bx,x\right> > \left< Cx,x \right>.$ Let $P_x$ denote the orthogonal projection onto the one-dimensional subspace generated by $x.$ For any $t > 0$ we have
\begin{equation} \label{szam}
\tr f'\left(t P_x +\left(I-P_x\right)\right)(C-B)=f'(t)\left<(C-B)x,x\right> + f'(1) \tr \left(I-P_x\right)(C-B).
\end{equation}
Since $\left<(C-B)x,x\right> < 0$ and $\left\{ f'(t) | t> 0\right\}$ is unbounded from above, hence the first term on the right hand side of \eqref{szam} is unbounded from below. By (\ref{bregform}), it follows that that the set
\[
 \left\{ H_f(B, A)-H_f(C, A) | A\in \hP_n \right \}
\]
is unbounded from below. This proves our claim.

Since the bijective map $\phi:\hP_n\to \hP_n$ preserves the Bregman $f$-divergence, using the above characterization of the order we obtain that $\phi$ is an order automorphism, i.e., 
for any $B,C\in \hP_n$ we have $B \leq C$ if and only if  $\phi(B) \leq \phi(C).$

By the result \cite[Theorem 1]{m11} of the first author, $\phi$, just as any order automorphism of $\hP_n$, is of the form
\[
 \phi(A)= T A T^{*}, \quad A \in \hP_n,
\]
where $T$ is an invertible linear or conjugate-linear operator on $\C^n.$ We may suppose that $T$ is linear, since if we are done with this, the case of conjugate-linear $T$ is not difficult to handle. 

We show that $T$ is unitary. Assume on the contrary that $T$ is not unitary. Then $T^* T \neq I$. Consider the polar decomposition $T=UP$ of $T$ where $P=\sqrt{T^* T}$ is positive definite and $U=T P^{-1}$ is unitary. Since the Bregman $f$-divergence is invariant under unitary congruences, hence we have
\begin{equation} \label{porz}
\begin{gathered}
 H_f(A,B)=H_f\left(\phi(A),\phi(B)\right)=H_f\left(TA T^*, TBT^*\right)\\
=H_f\left(UPAPU^*, UPBPU^*\right)=H_f\left(PAP,PBP\right) 
\end{gathered}
\end{equation}
for all $A,B \in \hP_n.$ Since $T$ is not unitary, hence $P \neq I,$ and thus $P$ has an eigenvalue different from 1. Without serious loss of generality we may assume that $P$ has an eigenvalue greater than 1 for the following reason. The map $A \mapsto PAP$ is a bijection of $\hP_n$ which preserves the Bregman $f$-divergence. Therefore, the inverse transformation  $A \mapsto P^{-1}A P^{-1}$ preserves the Bregman $f$-divergence as well. If $P$ does not have any eigenvalue greater than 1, $P^{-1}$ must have one.

Suppose that $Pv = \lambda v$ for some $\lambda > 1$ and unit vector $v \in \C^n.$
Let $Q_v$ be the orthogonal projection onto the one-dimensional subspace generated by $v.$ By (\ref{porz}), the transformation $A \mapsto PAP$ preserves the Bregman $f$-divergence and so does any of its powers $A \mapsto P^n AP^n$.  Hence
\[
H_f\left( \lambda^2 Q_v , Q_v \right)= H_f\left(P^n \lambda^2 Q_v P^n, P^n Q_v P^n \right)
\]
\begin{equation} \label{phat}
=H_f\left(\lambda^{2(n+1)} Q_v, \lambda^{2n} Q_v \right)
\end{equation}
holds for any $n \in \N$.

We now consider
the symmetrized Bregman $f$-divergence $H_f(A,B)+H_f(B,A)$ which can be written in the following convenient form
\[
H_f(A,B)+H_f(B,A)=\tr f(A)-\tr f(B) -\tr f'(B)(A-B)
\]
\[
+\tr f(B)-\tr f(A) - \tr f'(A)(B-A)=\tr \left(f'(A)-f'(B) \right)(A-B).
\]
By (\ref{phat}) we have
\[
H_f\left( \lambda^2 Q_v , Q_v \right)+H_f\left( Q_v , \lambda^2 Q_v \right)
\]
\[
=H_f\left(\lambda^{2(n+1)} Q_v, \lambda^{2n} Q_v \right)+H_f\left(\lambda^{2n} Q_v, \lambda^{2(n+1)} Q_v \right)
\]
\[
= \tr \left(f'(\lambda^{2(n+1)}Q_v)-f'(\lambda^{2n}Q_v) \right)\left(\lambda^{2(n+1)}Q_v-\lambda^{2n}Q_v\right)
\]
\[
=\left(f'\left(\lambda^{2(n+1)}\right)-f'\left(\lambda^{2n}\right)\right)\lambda^{2n}\left(\lambda^2-1\right)
\]
for any $n \in \N.$
This means that $\left(f'(\lambda^{2(n+1)})-f'\left(\lambda^{2n}\right)\right)\lambda^{2n}$ is independent of $n,$ that is,
\[
f'\left(\lambda^{2(n+1)}\right)-f'\left(\lambda^{2n}\right)=\frac{c}{\left(\lambda^2\right)^n} 
\]
holds for some constant $c.$
Therefore,
\[
\lim_{x \to \infty} f'(x)=\lim_{n \to \infty} f'\left(\lambda^{2(n+1)}\right)= \lim_{n \to \infty} \left(f'(1) +\sum_{k=0}^n \left( f'\left(\lambda^{2(k+1)}\right)-f'\left(\lambda^{2k}\right) \right) \right)
\]
\[
=f'(1)+ \lim_{n \to \infty} \sum_{k=0}^n \frac{c}{\left(\lambda^2\right)^k}= f'(1)+c \sum_{n=0}^{\infty} \left(\lambda^{-2}\right)^{n} < \infty. 
\]
which contradicts the assumption that $f'$ is unbounded from above. The proof of the theorem is complete.
\end{proof}

The probably most important Bregman $f$-divergences on $\hP_n$ correspond to the following functions: $x\mapsto x^p$ $(p>1)$, $x\mapsto x\log x -x$, $x\mapsto -\log x$.
Unfortunately, the general theorem above covers only the case of the first type of functions. Indeed, the second function has derivative which is neither bounded from below nor unbounded from above and the derivative of the third one is not bounded from below. Fortunately, the Bregman divergence related to the third function, i.e., Stein's loss is of the form \eqref{E:M4} and the corresponding preservers were characterized in \cite{ML15b}. By \cite[Theorem 2]{ML15b} a surjective map $\phi:\hP_n \to \hP_n$ preserves the Stein's loss if and only if there is an invertible linear or conjugate linear operator $T:\mathbb C^n\to \mathbb C^n$ such that $\phi$ is of the form 
\[
\phi(A)=TAT^*,\quad A\in \hP_n.
\]

In what follows we characterize the preservers of Umegaki's relative entropy or, in other words, von Neumann divergence which is the Bregman divergence corresponding to the function $x\mapsto x\log x -x$. It is clear that this divergence equals
\[
\tr \left(A(\log A-\log B)-(A-B)\right), \quad A,B\in \hP_n.
\]
The result reads as follows.

\begin{theorem}
Let $\phi:\hP_n \to \hP_n$ be a surjective map which satisfies
\begin{equation}
\begin{gathered}
\tr \left(\phi(A)\left(\log \phi(A)-\log \phi(B)\right)-\left(\phi(A)-\phi(B)\right)\right)\\=
\tr (A(\log A-\log B)-(A-B)), \quad A,B\in \hP_n.
\end{gathered}
\end{equation}
Then there exists a unitary or antiunitary operator $U: \C^n \rightarrow \C^n$ such that $\phi$ is of the form
\[
 \phi(A)=UAU^{*}, \quad  A \in \hP_n.
\]
\end{theorem}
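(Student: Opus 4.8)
The overall strategy mirrors the proof of Theorem~\ref{breg}: first reduce $\phi$ to an order isomorphism, then apply a structural description of order isomorphisms to obtain an explicit form for $\phi$, and finally use the divergence equation itself to force the map to be a unitary congruence. Here $f(x)=x\log x-x$, so $f'(x)=\log x$, which is unbounded from below; consequently the estimate in Claim~A bounding $\tr f'(A)(C-B)$ from below breaks down, and in fact varying the \emph{second} variable of $H_f$ only recovers the equality relation. The remedy is to recover an order by varying the \emph{first} variable instead.

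Concretely, I would first note that, $x\log x-x$ being strictly convex, $H_f$ is a genuine generalized distance measure, so surjectivity together with the divergence equation forces injectivity (if $\phi(A)=\phi(B)$ then $H_f(A,B)=0$, whence $A=B$); thus $\phi$ is bijective. Next I would compute, for fixed $B,C\in\hP_n$,
\[
H_f(A,B)-H_f(A,C)=\tr\bigl(A(\log C-\log B)\bigr)+\tr(B-C),
\]
which is affine in $A$. Since $\tr(AM)$ is bounded from below as $A$ ranges over $\hP_n$ exactly when the self-adjoint matrix $M$ is positive semidefinite, the set $\{H_f(A,B)-H_f(A,C)\mid A\in\hP_n\}$ is bounded from below if and only if $\log B\leq\log C$. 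As $\phi$ is a bijection preserving $H_f$, letting $\phi(A)$ run over $\hP_n$ shows this boundedness property, hence the relation $\log B\leq\log C$, is transported by $\phi$. Therefore $\Phi:=\log\circ\phi\circ\exp$ is an order automorphism of the real space $\mathcal{H}_n$ of self-adjoint $n\times n$ matrices. This is the essential difference from Theorem~\ref{breg}: because $\exp$ is \emph{not} operator monotone, the recovered order is the ``$\log$-order'', not the usual order on $\hP_n$, so I cannot invoke \cite[Theorem 1]{m11}; instead I would use the known description of the order automorphisms of $\mathcal{H}_n$, namely that they are the affine congruences $X\mapsto SXS^*+D$ with $S$ an invertible linear or conjugate-linear operator and $D=D^*$. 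This gives $\phi(A)=\exp\bigl(S(\log A)S^*+D\bigr)$.

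It remains to extract unitarity and $D=0$ from the full divergence equation. To see that $S$ is unitary I would plug $A=\exp(tP_w)$, with $P_w$ a rank-one projection, and $B=I$ into $H_f(\phi(A),\phi(B))=H_f(A,B)$ and compare exponential growth rates as $t\to+\infty$: the leading rate on the right is $\lambda_{\max}(P_w)=1$, while on the left it is $\lambda_{\max}(SP_wS^*)=\|Sw\|^2$, forcing $\|Sw\|=1$ for every unit vector $w$, i.e. $S$ is unitary, say $S=U$. Conjugating $\phi$ by $U^*$ (a unitary congruence, under which $H_f$ is invariant) reduces matters to $\phi(A)=\exp(\log A+D')$ with $D'=D'^*$; testing this on pairs $A,B$ diagonal in an eigenbasis of $D'$ yields $\sum_i e^{d_i}h(a_i,b_i)=\sum_i h(a_i,b_i)$, where $h(a,b)=a\log a-a\log b-a+b>0$ for $a\neq b$, which forces every $d_i=0$, hence $D'=0$ and $\phi(A)=UAU^*$. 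The conjugate-linear alternative for $S$ is handled as in Theorem~\ref{breg} and produces an antiunitary $U$.

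I expect the genuine obstacle to be conceptual rather than computational: one must recognize that the only order naturally visible to $H_f$ is the $\log$-order, so the argument has to detour through the order automorphisms of $\mathcal{H}_n$ and then recover rigidity (unitarity of $S$ and $D=0$) from the full divergence equation, rather than quoting the order-isomorphism theorem for $\hP_n$ directly as in Theorem~\ref{breg}.
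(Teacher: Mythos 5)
Your proposal is correct and follows essentially the same route as the paper's proof: the same characterization that $\left\{H_f(A,B)-H_f(A,C) \mid A\in\hP_n\right\}$ is bounded from below iff $f'(B)\leq f'(C)$ (here $\log B\leq\log C$), the same passage to the order automorphism $\log\circ\,\phi\,\circ\exp$ of the self-adjoint matrices together with its known affine-congruence form $X\mapsto SXS^*+D$, and the same two-stage rigidity argument extracting unitarity and then $D=0$ from the divergence equation. The only differences are cosmetic implementations of the rigidity step: the paper reduces to positive definite $T$ by polar decomposition, scales $B\mapsto tB$, and invokes $\phi^{-1}$ to force $T=I$, whereas you test $A=e^{tP_w}$, $B=I$ and read off $\|Sw\|=1$ for every unit vector $w$ directly (the same asymptotic growth comparison of trace exponentials), and your $D=0$ argument on commuting diagonal pairs is the same idea as the paper's choice of $A=I$ with $B$ commuting with $X$.
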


\begin{proof}
We begin with the following general observation. Assume that $f$ is a strictly convex differentiable function on $(0,\infty)$. We assert that for
any $B, C \in \hP_n$, the set
\[
 \left\{ H_f(A,B)-H_f(A,C) | A \in \hP_n \right\}
\]
is bounded from below if and only if $f'(B) \leq f'(C).$
Indeed, we have
\[
H_f(A,B)-H_f(A,C)
\]
\[
 =\tr \left(f'(C)-f'(B)\right)A + \tr \left(f(C)-f(B) -\left(f'(C)C-f'(B)B\right)\right)
\]
which is easily seen to be bounded from below if and only if $f'(C)-f'(B)\geq 0$. 

Therefore, for any surjective (and hence, by the strict convexity of $f$, bijective) map $\phi:\hP_n\to \hP_n$ which preserves the Bregman $f$-divergence, we obtain that $\phi$ has the property that
\[
f'(B)\leq f'(C) \Longleftrightarrow f'\left(\phi(B)\right)\leq f'\left(\phi(C)\right).
\] 
This means that the transformation $A\mapsto f'\left(\phi\left({f'}^{-1}(A)\right)\right)$ is an order automorphism of the set of all self-adjoint operators on $\mathbb C^n$ with spectrum contained in the range of $f'$.

In our particular case we have $f'=\log$, therefore $A\mapsto \log\left(\phi\left(e^A\right)\right)$ is an order automorphism of the set of all self-adjoint operators on $\mathbb C^n$. By \cite[Theorem 2]{m01} we have an invertible linear or conjugate-linear operator $T:\mathbb C^n\to \mathbb C^n$ and a self-adjoint linear operator $X:\mathbb C^n\to \mathbb C^n$ such that
\[
\log\left(\phi\left(e^A\right)\right)=TAT^*+X
\]
or 
\begin{equation}\label{E:M8}
\phi(A)=e^{T\log A T^*+X}, \quad A\in \hP_n.
\end{equation}
We assume that $T$ is linear, the conjugate-linear case is similar, not difficult to handle.
Consider the polar decomposition $T=UP$ of $T$ where $U$ is unitary and $P=\sqrt{T^*T}$ is positive definite.
As already mentioned, the unitary similarity transformation $A\mapsto UAU^*$ preserves Bregaman divergences.
Since
\[
\phi(A)=e^{UP(\log A) PU^*+X}=Ue^{P (\log A) P+U^*XU}U^*, \quad A\in \hP_n,
\]
it follows that in \eqref{E:M8} we can and do assume that $T$ is a positive definite operator. We prove that necessarily  $T=I$ holds. To see this, first assume that $T$ has an eigenvalue which is greater than 1. We know that
\begin{equation}\label{E:M3}
\begin{gathered}
\tr \left(e^{T(\log A) T+X}(T(\log A)T-T(\log B)T)-\left(e^{T(\log A) T+X}-e^{T(\log B) T+X}\right)\right)\\=
\tr (A(\log A-\log B)-(A-B)), \quad A,B\in \hP_n.
\end{gathered}
\end{equation}
Fixing $A,B$, write $tB$ in the place of $B$ where $t>0$ is arbitrary.
The above equality gives us that
\begin{equation}\label{E:M9}
a\log t+ \tr e^{(\log t)T^2 +T(\log B) T +X} +c=
d\log t+et +f, \quad t>0
\end{equation}
holds for some real constants $a,b,c,d,e,f$.
Select a real number $\mu$ such that $\mu I\leq T(\log B) T +X$.
Let $\l$ be an eigenvalue of $T$ which is greater than $1$ and let $x$ be a corresponding unit eigenvector. Denote by $P_x$ the rank-one projection onto the subspace generated by $x$.
Then $\l^2 P_x\leq T^2$, so for $t\geq 1$ we have
$(\log t)\l^2 P_x+\mu I \leq (\log t)T^2 +T(\log B) T +X$. By the monotonicity of trace functions (see \cite[2.10. Theorem]{carlen}) this implies that
\[
\tr e^{(\log t)\l^2 P_x+\mu I}
\leq \tr e^{(\log t)T^2 +T(\log B) T +X}, \quad t\geq 1.
\]
Therefore, the function $t\mapsto \tr e^{(\log t)T^2 +T(\log B) T +X}$, $t\geq 1$ can be minorized by a function $\alpha t^{\l^2} +\beta$ with some positive $\alpha$ and real number $\beta$. Since $\l^2>1$, considering the equality \eqref{E:M9} and letting $t$ tend to infinity, we easily obtain a contradiction. Therefore, the eigenvalues of $T$ are all less than or equal to 1. However, the inverse of $\phi$ also preserves the von Neumann divergence and we have
\[
\phi^{-1}(A)=e^{T^{-1}(\log A) T^{-1}-T^{-1}XT^{-1}}, \quad A\in \hP_n
\]
It follows that the eigenvalues of $T^{-1}$ are also not greater than 1. We conclude that $T=I$.

We finally prove that $X=0$. Let $A=I$ and $B$ be any element of $\hP_n$ which commutes with $X$. We obtain from \eqref{E:M3} that 
\[
\tr e^X(-\log B+B-I)=\tr (-\log B+B-I). 
\]
By the properties of the function $x\mapsto -\log x +x -1$, $x>0$ (strictly increasing for $x\geq 1$, and takes the value 0 at 1) it is easy to see that
any positive semidefinite operator $D$ which commutes with $X$ can be written as $D=-\log B +B-I$ with some positive definite $B$ which commutes with $X$. Consequently, we have
\[
\tr e^X D=\tr D
\]
for any positive semidefinite operator $D$ on $\mathbb C^n$.
This clearly implies that $e^X=I$, i.e., $X=0$. The proof of the theorem is complete.
\end{proof}

Let us mention here that the structure of all surjective maps on the set of nonsingular density operators (i.e., positive definite operators with trace 1) which preserve the relative entropy was determined in \cite[Theorem 3]{m11}. The conclusion there is analogous to the conclusion here, those maps are unitary or antiunitary congruence transformations. Furthermore, the fundamental idea of the proof there is basically the same as here although the details are rather different.

We now turn to the preservers of Jensen divergences. Our general result reads as follows.  

\begin{theorem} \label{jen}
Let $f$ be a differentiable strictly convex function on $(0,\infty)$, assume $\lim_{x\to 0+} f(x)$ exists and finite and $f'$ is unbounded from above. Pick $\lambda \in (0,1).$ If $\phi: \hP_n \rightarrow \hP_n$ is a surjective map which satisfies
\[
J_{f, \lambda}\left(\phi(A), \phi(B)\right)=J_{f, \lambda}\left(A, B\right), \quad A,B \in \hP_n,
\]
then there exists a unitary or antiunitary operator $U: \C^n \rightarrow \C^n$ such that $\phi$ is of the form
\[
 \phi(A)=UAU^{*}, \quad A \in \hP_n.
\]
\end{theorem}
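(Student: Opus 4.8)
The plan is to follow the route of Theorem~\ref{breg}: first convert the divergence-preserving property of $\phi$ into order preservation, then invoke the structure theorem \cite[Theorem 1]{m11} for order automorphisms of $\hP_n$, and finally force the resulting congruence to be implemented by a unitary or antiunitary operator. Note first that $\phi$ is automatically bijective, since $J_{f,\lambda}$ is a generalized distance measure (strict convexity gives $J_{f,\lambda}(X,Y)=0 \iff X=Y$), so $\phi(X_1)=\phi(X_2)$ yields $J_{f,\lambda}(X_1,X_2)=J_{f,\lambda}(\phi(X_1),\phi(X_2))=0$.

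\emph{Order characterization.} The core is the analogue of Claim~A: for $B,C\in\hP_n$ the set $\{J_{f,\lambda}(B,A)-J_{f,\lambda}(C,A) : A\in\hP_n\}$ is bounded from below if and only if $B\le C$. To see this I would use $\frac{d}{ds}\tr f(M+sN)=\tr(f'(M)N)$ along the segment $M(s)=\lambda(sB+(1-s)C)+(1-\lambda)A$ to write
\[
J_{f,\lambda}(B,A)-J_{f,\lambda}(C,A)=\lambda\tr\big(f(B)-f(C)\big)-\lambda\int_0^1\tr\big(f'(M(s))(B-C)\big)\,ds.
\]
The decisive observation is the uniform bound $M(s)\ge\lambda\delta I$ with $\delta=\min(\lambda_{\min}(B),\lambda_{\min}(C))>0$, valid for every $A\in\hP_n$ and $s\in[0,1]$; since $f'$ is increasing this gives $f'(M(s))\ge f'(\lambda\delta)I$. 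Hence if $B\le C$ then $\tr(f'(M(s))(B-C))\le f'(\lambda\delta)\tr(B-C)$, so the expression is bounded below by a constant independent of $A$. Conversely, if $B\not\le C$ choose a unit vector $x$ with $\langle(B-C)x,x\rangle>0$ and take $A=tP_x+(I-P_x)$; as $t\to\infty$ the top eigenvalue of $M(s)$ tends to $+\infty$ uniformly in $s$ with eigenvector approaching $x$, so $f'(M(s))$ contributes a term $\sim f'(\text{large})\langle(B-C)x,x\rangle\to+\infty$ (this is where the unboundedness of $f'$ from above enters), making the set unbounded from below. Since $\phi$ is bijective, preserves $J_{f,\lambda}$, and $\phi(A)$ ranges over all of $\hP_n$, this characterization yields $B\le C\iff\phi(B)\le\phi(C)$, i.e. $\phi$ is an order automorphism of $\hP_n$.

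\emph{Structure and final reduction.} By \cite[Theorem 1]{m11} there is an invertible linear or conjugate-linear $T$ with $\phi(A)=TAT^*$; I assume $T$ linear (the conjugate-linear case being analogous) and write $T=UP$ with $U$ unitary and $P=\sqrt{T^*T}$ positive definite. Unitary invariance of $J_{f,\lambda}$ reduces everything to the map $A\mapsto PAP$, which again preserves $J_{f,\lambda}$, so it remains to prove $P=I$. Passing to an orthonormal eigenbasis of $P=\mathrm{diag}(\mu_1,\dots,\mu_n)$ and taking diagonal $A,B$, the identity $J_{f,\lambda}(PAP,PBP)=J_{f,\lambda}(A,B)$ splits coordinatewise; freezing all but one coordinate gives, for each eigenvalue $\mu$ of $P$,
\[
j(\mu^2 a,\mu^2 b)=j(a,b),\quad a,b>0,\qquad j(a,b)=\lambda f(a)+(1-\lambda)f(b)-f(\lambda a+(1-\lambda)b).
\]
If some $\mu>1$, iterating downward gives $j(a,b)=j(\mu^{-2k}a,\mu^{-2k}b)\to j(0,0)=0$ as $k\to\infty$, where the finiteness of $\lim_{x\to0+}f(x)$ is used; thus $j\equiv0$, contradicting $j(a,b)>0$ for $a\ne b$ (strict convexity). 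Applying the same to $\phi^{-1}$, i.e. to $P^{-1}$, excludes eigenvalues below $1$, whence $P=I$ and $\phi(A)=UAU^*$.

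\emph{Main obstacle.} I expect the order characterization to be the crux. In contrast to the Bregman setting, here $f'$ need not be bounded from below (e.g. for $f(x)=x\log x-x$), so the direct estimate of Claim~A is unavailable; the resolution is the uniform bound $M(s)\ge\lambda\delta I$, which keeps the spectrum of $M(s)$ away from $0$ regardless of $A$ and so restores a usable lower bound on $f'(M(s))$. The converse direction additionally requires a perturbation argument to track the eigenvector attached to the blowing-up eigenvalue, which is the most delicate point to make precise.
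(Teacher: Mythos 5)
Your proposal is correct in substance and follows the same skeleton as the paper's proof: a boundedness-from-below characterization of the order in terms of divergence differences, then the order-automorphism theorem \cite[Theorem 1]{m11}, then polar decomposition and elimination of the positive part $P$. However, both of your key sub-arguments genuinely differ from the paper's. For the forward half of the order characterization (the paper works with the slots interchanged, which is equivalent since $J_{f,\lambda}(X,Y)=J_{f,1-\lambda}(Y,X)$), the paper argues by contradiction: it takes a sequence $A_k$ driving the difference to $-\infty$, compares the increasingly ordered eigenvalues of $\lambda A_k+(1-\lambda)B$ and $\lambda A_k+(1-\lambda)C$ via Weyl's inequality \cite[4.3.3. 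Corollary]{HJ}, and uses that $f$ is bounded from below. Your integral identity together with the uniform bound $M(s)\geq \lambda\delta I$ is shorter and isolates exactly why the Jensen case escapes the failure of the Bregman-type estimate (Claim A needs $f'$ bounded below precisely because there is no such uniform spectral floor); note that $f'\left(M(s)\right)\geq f'(\lambda\delta)I$ is a purely spectral statement, so monotonicity of the scalar $f'$ suffices and no operator monotonicity is needed. Your endgame is also more elementary than the paper's: instead of iterating $\phi$, plugging in $A=P_x$, $B=0$, and invoking monotonicity of $t\mapsto \lambda f(t)-f(\lambda t)$, you derive the scaling law $j(\mu^2a,\mu^2b)=j(a,b)$ coordinatewise and let the scale run to $0$, using continuity of $f$ at $0+$; in fact your version does not even need $\phi^{-1}$, since for $\mu<1$ one iterates forward, $j(\mu^{2k}a,\mu^{2k}b)\to j(0,0)=0$. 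One micro-step you should record: freezing all but one coordinate only gives $j\left(\mu_i^2a,\mu_i^2b\right)-j(a,b)=c_i$ with $\sum_i c_i=0$; the constants vanish because $j(a,a)=0$.

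The one soft spot, which you flag yourself, is the converse half of the order characterization: with $A=tP_x+(I-P_x)$ you must control, uniformly in $s\in[0,1]$, the eigenvector attached to the exploding eigenvalue of $M(s)=(1-\lambda)tP_x+R(s)$. This can be made precise (the top eigenvector is $x+O(1/t)$ uniformly, the remaining spectrum stays in a fixed compact subset of $(0,\infty)$ by interlacing, so $\tr\left(f'\left(M(s)\right)(B-C)\right)\geq f'\left((1-\lambda)t+O(1)\right)\left(\varepsilon+o(1)\right)-O(1)\to\infty$), but it is exactly the perturbation bookkeeping the paper avoids: there one chooses $A_t$ affinely in $t$ so that one of the two convex combinations equals $mI+tP_x$ exactly, and a single application of Peierls' inequality \cite[2.9. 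Theorem]{carlen} to the other, $mI+tP_x+(1-\lambda)(B-C)$, gives the bound $f(m+t)-f\left(m+t+(1-\lambda)\varepsilon\right)+K\to-\infty$ with no spectral perturbation theory at all. If you either carry out the perturbation estimates or adopt that choice of $A_t$ (adapted to your slot convention), your proof is complete.
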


\begin{proof}
Observe first that, by the assumptions on the function, $f$ is monotonically increasing for large enough values of its variable. Next, we verify the following.

\textbf{Claim B.} 
For any $B, C \in \hP_n$, the set
\[
 \left\{ J_{f, \lambda}(A,B)-J_{f, \lambda} (A,C) | A \in \hP_n \right\}
\]
is bounded from below if and only if $B \leq C.$

To prove the claim first observe that 
\[
J_{f, \lambda}(A,B)-J_{f, \lambda} (A,C)=(1-\lambda)\left( \tr f(B)-\tr f(C)\right)+
\]
\[
+\tr f \left( \lambda A +(1-\lambda) C \right)-\tr f \left( \lambda A +(1-\lambda) B \right), \quad A\in \hP_n. 
\]
Assume now that $B\leq C$ and that there is a sequence $(A_k)$ in $\hP_n$ such that
$$
\tr f \left( \lambda A_k +(1-\lambda) C \right)-\tr f \left( \lambda A_k +(1-\lambda) B \right)\to -\infty.
$$
Denote $\mu_i^{(k)}$, $i=1,\ldots ,n$ the eigenvalues of $\lambda A_k +(1-\lambda) B$ and $\l_i^{(k)}$, $i=1,\ldots ,n$ the eigenvalues of $\lambda A_k +(1-\lambda) C$ both ordered in increasing order. By the Weyl inequality (see e.g.  \cite[4.3.3. Corollary]{HJ}) it follows that $\mu_i^{(k)}\leq \l_i^{(k)}$ for all $i$ and $k$. Moreover, we have $\sum_i \left(f\left(\l_i^{(k)}\right)- f\left(\mu_i^{(k)}\right)\right)\to -\infty$ which implies that the sequence $\left(f\left(\l_i^{(k)}\right)- f\left(\mu_i^{(k)}\right)\right)$ is not bounded from below for some $i=1,\ldots, n$ and hence it has a subsequence $\left(f\left(\l_i^{(k_l)}\right)- f\left(\mu_i^{(k_l)}\right)\right) \to -\infty$. Since $f$ is bounded from below, we deduce that $f\left(\mu_i^{(k_l)}\right)\to \infty$. This implies that $\mu_i^{(k_l)} \to \infty$ and hence $f\left(\l_i^{(k_l)}\right)-f\left(\mu_i^{(k_l)}\right)\geq 0$ for all but finitely many indexes $l$ which is a contradiction.

Conversely, if $B \not\leq C,$ then there exists a unit vector $x \in \C^n$ such that $\left< Bx,x\right> > \left< Cx,x \right>.$ Set $\varepsilon=\left< (B-C)x,x\right>$ and let $P_x$ denote the orthogonal projection onto the one-dimensional subspace generated by $x.$ Let $m$ be a positive number such that $mI -(1-\lambda)C$ is positive definite. For any $t >0$ set
\[
A_t:=\frac{1}{\lambda}\left(m I+t P_x-(1-\lambda)C \right).
\]
Now
\begin{equation}\label{atir}
\begin{gathered}
J_{f, \lambda}(A_t,B)-J_{f, \lambda} (A_t,C)=(1-\lambda)\left( \tr f(B)-\tr f(C)\right)\\
+\tr f \left( \lambda A_t +(1-\lambda) C \right)-\tr f \left( \lambda A_t +(1-\lambda) B \right)
\\
=(1-\lambda)\left( \tr f(B)-\tr f(C)\right)\\+ \tr f \left( m I +t P_x\right)-\tr f \left( m I +t P_x+(1-\lambda)(B-C) \right).
\end{gathered}
\end{equation}
Let $\{x, y_2, y_3, \dots, y_n\}$ be an orthonormal basis in $\C^n.$ For $2 \leq j \leq n,$ denote by $a_{jj}$ the diagonal matrix elements of $ m I +t P_x+(1-\lambda)(B-C)$ relative to this basis, that is,
\[
 a_{jj}:=\left<\left(m I +t P_x+(1-\lambda)(B-C)\right) y_j, y_j\right>=\left<\left(m I+(1-\lambda)(B-C)\right) y_j, y_j\right>.
\]
Note that $a_{jj}$ is independent of $t$ for $2 \leq j$ and 
\[
\left<\left(m I +t P_x+(1-\lambda)(B-C)\right) x, x\right>=m+t+(1-\lambda)\varepsilon. 
\]
By Peierls inequality (see \cite[2.9. Theorem]{carlen}), for the convex function $f$ we have
\[
f(m+t+(1-\lambda)\varepsilon)+\sum_{j=2}^n f(a_{jj}) \leq \tr f \left( m I +t P_x+(1-\lambda)(B-C) \right).
\]
On the other hand, it is apparent that
\[
\tr f \left( m I +t P_x\right)= f(m+t)+ (n-1)f(m).
\]
Therefore, by (\ref{atir}),
\[
J_{f, \lambda}(A_t,B)-J_{f, \lambda} (A_t,C) \leq f(m+t)-f(m+t+(1-\lambda)\varepsilon)+K(B,C),
\]
where
\[
K(B,C)=(1-\lambda)\left( \tr f(B)-\tr f(C)\right) + (n-1)f(m)-\sum_{j=2}^n f(a_{jj})
\]
is independent of the parameter $t.$
Since
$f'$ is unbounded from above, hence 
\[
\lim_{t \to \infty }f(m+t)-f(m+t+(1-\lambda)\varepsilon)=- \infty, 
\]
and this completes the proof of our claim.

Using the characterization of the order given in Claim B, we see that the transformation $\phi$ is an order automorphism of $\hP_n$ and hence it is of the form
\[
 \phi(A)= T A T^{*}, \quad A \in \hP_n,
\]
where $T$ is an invertible linear or conjugate-linear operator on $\mathbb C^n$. We consider only the case where $T$ is linear and prove that then $T$ is necessarily unitary.

As already mentioned, the unitary-antiunitary congruence transformations preserve the Jensen divergences. Hence, by polar decomposition, we can assume that $T$ is a positive definite operator.

Any power of $\phi$ is also a divergence preserver, so for every positive integer $n$ we have that
\begin{equation}\label{E:M10}
\begin{gathered}
\tr \l f(T^n AT^n ) +(1-\l) f(T^n BT^n)-f(T^n(\l A +(1-\l)B)T^n)\\=
\tr \l f(A) +(1-\l) f(B)-f(\l A +(1-\l)B), \quad A,B\in \hP_n.
\end{gathered}
\end{equation}
Since $f$ is continuously extendible onto $[0,\infty)$, thus we can insert any positive semidefinite operators $A,B$ in the equality above. Assume $T$ has an eigenvalue, say $s$, which is greater than 1 and $x$ is a corresponding unit eigenvector. As before, denote by $P_x$ the orthogonal projection onto the subspace generated by $x$. Plug $A=P_x$ and $B=0$ into \eqref{E:M10}. We have
\[
\l f\left(s^{2n}\right)-f\left(\l s^{2n}\right)=c
\]
with some constant $c$.
One can easily check that the function $t\mapsto \l f(t)-f(\l t)$ is monotonically increasing (just differentiate and use that $f'$ is increasing). Since $s^{2n}\to \infty$, it follows that
$$ 
\l f(t)-f(\l t)=c,\quad t>0.
$$
We deduce $\l f'(t)=f'(\l t)\l$, $t>0$ and this implies that $f'$ is constant. We obtain that $f$ is affine, a contradiction. It follows that $T$ has no eigenvalue which is greater than 1. Since $\phi^{-1}$ also preserves the $\l-f$ Jensen divergence, we have that the eigenvalues of $T^{-1}$ are also not greater than 1. It follows that $T=I$ and the proof of the theorem is complete.
\end{proof}

Let us again consider the three most important examples 
$x\mapsto x^p$ $(p>1)$, $x\mapsto x\log x -x$, $x\mapsto -\log x$ of generating functions.
The first two do satisfy the conditions in our theorem, hence the corresponding preservers are unitary-antiunitary congruence transformations. As for the third one, it does not satisfy the conditions (not bounded below), but the Jensen divergence in that case is of the form \eqref{E:M4}, see the discussion before Proposition~\ref{P:2}. By \cite[Theorem 2]{ML15b}, a surjective map $\phi:\hP_n \to \hP_n$ preserves the corresponding Jensen divergence (i.e., Chebbi-Moakher log-determinant $\alpha$-divergence) if and only if there is an invertible linear or conjugate linear operator $T:\mathbb C^n\to \mathbb C^n$ such that $\phi$ is of the form 
\[
\phi(A)=TAT^*,\quad A\in \hP_n.
\]

In closing the paper we finally remark that the main result in our paper \cite{ML14f} where we have described the structure of all transformations on any dense subset of density operators that preserve the Holevo bound is closely related to Theorem~\ref{jen} in the particular case where the function $f$ is $x\mapsto x\log x -x$, $x>0$. Indeed, the Holevo bound of the ensemble $\{\l, 1-\l\}$ is just the corresponding Jensen divergence. We admit that the proof in \cite{ML14f} is of completely different character.

\bibliographystyle{amsplain}

\end{document}